\newtheorem{theorem}{Theorem}[section]
\newtheorem{lemma}[theorem]{Lemma}
\theoremstyle{definition}
\newtheorem{definition}[theorem]{Definition}
\theoremstyle{remark}
\numberwithin{equation}{section}
\begin{document}

\title[ Ricci $\rho$-Solitons on 3-dimensional $\eta$-Einstein almost
Kenmotsu manifolds ]{Ricci $\rho$-Solitons on 3-dimensional $\eta$-Einstein almost
Kenmotsu manifolds }

\author{Shahroud Azami and Ghodratallah Fasihi-Ramandi}
\address{Department of Mathematics, Faculty of Sciences, Imam Khomeini International University, Qazvin, Iran. }

\email{azami@sci.ikiu.ac.ir and fasihi@sci.ikiu.ac.ir}



\subjclass[2010]{53Axx, 53Bxx}



\keywords{Almost Kenmotsu manifold, $\rho$-Ricci soliton, $\eta$-Einstein, Generalized k-nullity distribution.}

\begin{abstract}
The notion of quasi-Einstein metric in theoretical physics and in relation with string theory is equivalent to the notion of Ricci soliton in differential geometry. Quasi-Einstein metrics or Ricci solitons serve also as solution to Ricci flow equation, which is an evolution equation for Riemannian metrics on a Riemannian manifold. Quasi-Einstein metrics are subject of great interest in both mathematics and theoretical physics.
In this paper the notion of Ricci $\rho$-soliton as a generalization of Ricci soliton is defined. We are motivated by the Ricci-Bourguignon flow to define this concept. We show that if a 3-dimensional almost Kenmotsu  Einstein manifold M be a $\rho$-soliton, then M is a Kenmotsu manifold of constant sectional curvature $-1$ and the $\rho$-soliton is expanding, with $\lambda=2$.
\end{abstract}

\maketitle
\section{Introduction}
Ricci flow and other geometric flows are an active subject of current research in physics and mathematics. The notion of Ricci-Bourguignon flow as a generalization of Ricci flow has been introduced in \cite{5}. The Ricci-Bourguignon flow is an evolutionary equation for Riemannian metrics on a manifold $M^n$ as follows.
\begin{equation}
\dfrac{\partial g}{\partial t}=-2 (\mathrm{Ric} -\rho Rg), \quad g(0)=g_0
\end{equation}
where, $\mathrm{Ric}$ is the Ricci curvature tensor, $R$ is the scalar curvature with respect to $g$ and $\rho$ is a real non-zero constant. Short time existence and uniqueness for the solution of this geometric flow has been proved in \cite{6}. In fact, for sufficiently small $t$ the equation has a unique solution for $\rho < {1/2(n-1)}$.\\
In the other hand, quasi Einstein metrics or Ricci solitons serve as a solution to Ricci flow equation. This motivates a more general type of Ricci soliton by considering the Ricci-Bourguignon flow. In fact, a Riemannian manifold $(M, g)$ of dimension $n\geq 3$ is said to be Ricci $\rho$-soliton if
\begin{equation}\label{soliton}
\dfrac{1}{2}\mathcal{L}_V g +\mathrm{Ric} +(\lambda +\rho R)g=0,
\end{equation}
where, $\mathcal{L}_V$ denotes the Lie derivative operator along vector field $V$ and $\lambda$ is an arbitrary real constant. Similar to Ricci solitons, a Ricci $\rho$ soliton is called expanding if $\lambda >0$, steady if $\lambda =0$ and
shrinking if $\lambda <0$. If the vector field $V$ is the gradient of a smooth function $f\in C^\infty (M)$, then $(M,g)$ is called a gradient $\rho$-soliton. Hence, (\ref{soliton}) reduces to the form
\begin{equation}
\mathrm{Hess}f +\mathrm{Ric} +(\lambda +\rho R)g=0.
\end{equation}

Recently, Ricci solitons and gradient Ricci solitons on some kinds of three dimensional almost contact metric manifolds have been studied by many authors. For instances, Ricci solitons and gradient Ricci solitons on
three-dimensional normal almost contact metric manifolds are investigated in \cite{8} and three-dimensional trans-Sasakian manifolds are considered in \cite{14}. Moreover, a complete classification of
Ricci solitons on three-dimensional Kenmotsu manifolds is given (see \cite{10} and \cite{7}). Also, in \cite{15} Wang and Liuva showed that if the metric $g$ of a three-dimensional $\eta$-Einstein almost Kenmotsu manifold $M$ be a Ricci soliton, then $M$ is a Kenmotsu manifold of constant sectional curvature $-1$ and the soliton is expanding. Generalizing some corresponding results of
the paper \cite{15}, the present paper is devoted to investigating Ricci $\rho$-solitons on a type of almost Kenmotsu manifolds
of dimension three, namely, $\eta$-Einstein almost Kenmotsu manifolds.\\
This paper is organized as follows. In the preliminaries section, we recall some well known basic formulas and properties of almost Kenmotsu manifolds. In section 3, we completely classify Ricci $\rho$-solitons on a three dimensional  almost Kenmotsu manifold such that the Reeb vector field belongs to the generalized $k$-nullity distribution. Moreover, an example of such manifolds can also be seen in the last section.
\section{Preliminaries}
In this section we summarize some basic definitions on contact manifolds, with emphasis on those
aspects that will be needed in the next section. For more details one can consult \cite{4}.

\begin{definition}
An almost contact structure on a $(2n+1)$-dimensional smooth manifold $M$ is a triple $(\phi ,\xi , \eta)$, where $\phi$ is a $(1, 1)$-type tensor field, $\xi$ is a global vector field and $\eta$ a 1-form, such that
\begin{equation}
\phi^2 =-\mathrm{id}+\eta \otimes \xi, \quad \eta (\xi)=1,
\end{equation}
where, $\mathrm{id}$ denotes the identity mapping, which imply that $\phi (\xi)=0$, $\eta \circ \phi=0$ and
$\mathrm{rank}(\phi)=2n$. Generally, $\xi$ is called the characteristic vector field or the Reeb vector field.
\end{definition}
As mentioned, contact manifolds are endowed with extra structures rather than differential structure, so it is natural to consider special metrics on these manifold in which some conditions of compatibility are requested for them.
\begin{definition}
A Riemannian metric $g$ on $M^{2n+1}$ is said to be compatible with the almost contact structure $(\phi ,\xi ,\eta )$ if for every $X,Y\in \mathcal{X}(M)$, we have
\[
g(\phi (X),\phi (Y)) = g(X,Y)) - \eta (X)\eta (Y).
\]
\end{definition}
An almost contact structure endowed with a compatible Riemannian metric is said to be an almost contact metric structure. Also, the
fundamental $2$-form Φ of an almost contact metric manifold $M^{2n+1}$ is defined by 
\[
\Phi (X,Y) = g(X,\phi (Y))
\]
for any vector fields $X$, $Y$ on $M^{2n+1}$.
\begin{definition}
If $(M,\phi ,\xi, \eta ,g)$ be an almost contact metric structure, then there is a well known deformation of contact forms which is named D-homothetic deformation and is defined by
\[\bar{\eta}=a\eta , \quad \bar{\phi}=\phi,\quad \bar{\xi}=\dfrac{1}{a}\xi ,\quad \bar{g}=ag+a(a-1)\eta\otimes \eta,
\]
where, $a$ is a positive constant.
\end{definition}
Also, we have the following definitions and concepts in contact manifolds.
\begin{definition}
An almost Kenmotsu manifold is defined as an almost contact metric manifold such that $d\eta =0$ and $d\Phi =2\eta \wedge \Phi$. Also, An almost Kenmotsu manifold is said to be $\beta$-Kenmotsu manifold if for all vector field $X$ and $Y$ on $M$, we have
\[(\nabla_X \phi )Y=\beta [g(\phi(X),Y)\xi-\eta (Y)\phi (X)]\]
where, $\nabla$ is the Levi-Civita connection with respect to $g$ and $\beta$ is a smooth funcyion on $M$. If $\beta=1$ definition of Kenmotsu manifold is obtained.
\end{definition}
Local structure of Kenmotsu manifolds is determined in \cite{11}.
\begin{theorem}{\cite{11}}
A Kenmotsu manifold $M^{2n+1}$ is locally isometric to a warped product $I\times_\theta M^{2n}$, where $M^{2n}$ is a Kahlerian manifold,
$I$ is an open interval with coordinate $t$ and the warping function $\theta =ce^t$ for some positive constant $c$.
\end{theorem}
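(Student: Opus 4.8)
The plan is to follow Kenmotsu's original argument: use the defining identity to make the Reeb flow completely rigid, recognise the resulting $g$-orthogonal splitting as a warped product, and then identify the base as a K\"ahler manifold.

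First I would put $Y=\xi$ in $(\nabla_X\phi)Y=g(\phi X,Y)\xi-\eta(Y)\phi X$; since $\phi\xi=0$ and $g(\phi X,\xi)=0$ this yields $\phi(\nabla_X\xi)=\phi X$, and since $g(\nabla_X\xi,\xi)=\tfrac12 X(\lvert\xi\rvert^2)=0$ it follows that
\[
\nabla_X\xi=X-\eta(X)\xi=-\phi^2X .
\]
In particular $\nabla_\xi\xi=0$, so the integral curves of $\xi$ are unit-speed geodesics. By hypothesis $d\eta=0$, so $\eta$ is locally exact, say $\eta=dt$; from the identity $\eta(X)=g(X,\xi)$ one gets $\xi=\operatorname{grad}t$ with $\lvert\operatorname{grad}t\rvert\equiv 1$, and the distribution $\mathcal D=\ker\eta$ is integrable (again because $d\eta=0$). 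Flowing along $\xi$ then identifies a neighbourhood of any point with $I\times M^{2n}$, where $I$ carries the coordinate $t$ and $M^{2n}$ is a level hypersurface of $t$, i.e. an integral manifold of $\mathcal D$, the splitting being orthogonal for $g$.

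Next I would determine the metric. The formula for $\nabla\xi$ gives $(\mathcal L_\xi g)(X,Y)=2\bigl(g(X,Y)-\eta(X)\eta(Y)\bigr)$, so $\mathcal L_\xi g=2g$ on horizontal vectors. Writing $g=dt^2+g_t$ with $g_t$ the metric induced on the slices, this becomes the linear ODE $\partial_t g_t=2g_t$, whose solution is $g_t=\theta(t)^2 h$ with $\theta(t)=ce^{t}$ $(c>0)$ and $h$ a fixed metric on $M^{2n}$. Hence $(M,g)$ is locally the warped product $I\times_\theta M^{2n}$ with exponential warping function.

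Finally I would check that $(M^{2n},h)$ is K\"ahler for the almost complex structure $J$ obtained by restricting $\phi$ to $\mathcal D$: here $\phi$ preserves $\mathcal D$, squares to $-\mathrm{id}$ on it, is $h$-orthogonal by compatibility, and is invariant along the Reeb flow (a short computation shows $\mathcal L_\xi\phi=0$ on $\mathcal D$), so $J$ does descend to the base. Integrability of $J$ follows from the fact that the Kenmotsu identity forces the almost contact structure to be normal; combined with $d\eta=0$ this makes the Nijenhuis tensor of $J$ vanish, so $J$ is a complex structure. For the parallelism $\nabla^hJ=0$ I would insert horizontal vector fields into $(\nabla_X\phi)Y=g(\phi X,Y)\xi-\eta(Y)\phi X$ and expand $\nabla^g$ via the O'Neill warped-product formulas (which express $\nabla^g_XY$ through $\nabla^h_XY$, $\operatorname{grad}t$ and $\theta'/\theta$): the right-hand side is purely vertical while the horizontal part of the left-hand side equals $(\nabla^h_XJ)Y$, forcing it to vanish, and the terms proportional to the warping factor cancel because $\phi X$ is again horizontal. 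I expect this last step to be the main obstacle — carefully separating horizontal and vertical components through the warped-product connection and confirming that the contributions carrying $\theta=ce^t$ really drop out; the remainder is essentially bookkeeping on the rigid Reeb flow. One should also remark that, after the rescaling by $\theta$, the leaf $M^{2n}$ is determined up to isometry independently of the chosen level, so the local model is well defined.
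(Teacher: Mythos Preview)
The paper does not supply a proof of this theorem; it is quoted verbatim from Kenmotsu's original article \cite{11} and used only as background. Your proposal is a faithful and correct reconstruction of Kenmotsu's own argument: deriving $\nabla_X\xi=X-\eta(X)\xi$ from the defining identity, using $d\eta=0$ to get the local splitting $I\times M^{2n}$, integrating $\mathcal L_\xi g=2(g-\eta\otimes\eta)$ to obtain the exponential warping $\theta=ce^t$, and then descending $\phi$ to a parallel complex structure on the leaf via the warped-product connection formulas. There is nothing to compare against in the present paper, and no gap in your outline; the only place requiring care is, as you note, the bookkeeping in the last step, where the vertical component $-(\theta'/\theta)g(X,\phi Y)\xi=g(\phi X,Y)\xi$ of $(\nabla_X\phi)Y$ matches the right-hand side of the Kenmotsu identity exactly because $\theta'/\theta=1$, leaving $(\nabla^h_XJ)Y=0$.
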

\begin{definition}
On an almost contact metric manifold $M$, if the Ricci operator satisfies
\begin{equation}\label{einstein}
\mathrm{Ric} = \alpha g + \beta \eta  \otimes \eta 
\end{equation}
where $\mathrm{Ric}$ is the Ricci curvature tensor and both $\alpha$ and $\beta$ are smooth functions on $M$, then $M$ is said to be an $\eta$-Einstein manifold. 
\end{definition}
Obviously, an $\eta$-Einstein manifold with vanishing $\beta$ and $\alpha$ a constant is an
Einstein manifold. An $\eta$-Einstein manifold is said to be proper $\eta$-Einstein if $\beta \neq 0$.\\

Finally, remind that there are two natural tensor fields (with respect to metric contact structure) on an almost metric contact manifold. Set
\begin{equation}
h=\dfrac{1}{2}\mathcal{L}_\xi \phi ,\qquad \ell =R(.,\xi)(\xi),
\end{equation}
where, $R$ denotes the Riemannian curvature tensor related to $g$. One can easily check that both $h$ and $\ell$ are symmetric tensor fields and satisfy the following equations.
\[
\ell (\xi ) = 0,\,\,\,\,\,h(\xi ) = 0,\,\,\,\,\,\,\mathrm{tr}(h) = 0,\,\,\,\,\,\mathrm{tr}(h\phi ) = 0,\,\,\,\,\,h \circ \phi  + \phi  \circ h = 0.
\]
Also, the following identities are proven in \cite{4}.
\begin{align}
\nabla _X \xi  &= \phi ^2 (\xi ) + h'X, \\
\phi \ell \phi  - \ell  &= 2(h^2  - \phi ^2 ),\\
\mathrm{tr}(\ell ) &= \mathrm{Ric}(\xi ,\xi ) =g(\mathrm{Rc}(\xi),\xi)=  - 2n - \mathrm{tr}(h^2 ),\\
R(X,Y)\xi  &= \eta (X)(Y - \phi hY) - \eta (Y)(X - \phi hX) + (\nabla _Y \phi h)X - (\nabla _X \phi h)Y,
\end{align}
where, $h'=h\circ \phi$ and $\mathrm{Rc}$ is Ricci operator with respect to $g$.
\section{Main Results}
In this section $(M,g)$ is a three-dimensional almost Kenmotsu manifold. If the characteristic vector field $\xi$ of $M$ belongs to generalized $k$-nullity distribution defined by
\[
R(X,Y)\xi  = k[\eta (Y)X - \eta (X)Y]
\]
then Proposition 3.1 of \cite{13} guarantees that $M$ is a $\eta$-Einstein manifold and vice versa. Moreover, the function $k$ in the above formula can be expressed by $k=(\alpha +\beta)/2$. \\
Let $(M, g)$ be an almost Kenmotsu manifold of dimension 3 with $\xi$ belonging to the generalized$k$-nullity distribution. The following formulas are proven in \cite{12}.
\begin{align}
&h^2  = h'^2  = (k + 1)\phi ^2, \label{eq}\\
&\qquad \mathrm{Rc}(\xi ) = 2k\xi \nonumber .
\end{align}
Then the above equation follows that $k\leq -1$ everywhere on $M$. Moreover, $k=-1$ holds if and
only if $h=h'=0$. If $k<-1$, we denote the two non-zero
eigenvalues of $h$ by $\nu$ and $-\nu$ respectively, where $\nu=\sqrt{-1-k}>0$.
Furthermore, by Proposition 3.1 of \cite{8} we also have
\[\nabla_\xi h'=-2h'.\]
We need the following results from \cite{15} for proving our main theorem.
\begin{lemma}\cite{15}\label{lem}
Let $(M, g)$ is an almost Kenmotsu manifold of dimension 3 such that the Reeb vector field belongs to the generalized $k$-nullity distribution, then we have
\[\vec{\nabla} k=-4(k+1)\xi\]
where $\vec{\nabla} $ denotes the gradient operator with respect to $g$.
\end{lemma}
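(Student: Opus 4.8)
The plan is to establish two independent facts — the value of $\xi k$, and the collinearity of $\vec{\nabla}k$ with $\xi$ — and then combine them. For the first, I would start from the observation that $\phi\xi=0$ forces $\phi^2\xi=0$ and $h'\xi=h\phi\xi=0$, so that $\nabla_\xi\xi=0$, and therefore also $\nabla_\xi\eta=0$ and $\nabla_\xi\phi^2=\nabla_\xi(-\mathrm{id}+\eta\otimes\xi)=0$. Differentiating the identity $h'^2=(k+1)\phi^2$ of \eqref{eq} along $\xi$ and using $\nabla_\xi h'=-2h'$, the left side equals $(\nabla_\xi h')h'+h'(\nabla_\xi h')=-4h'^2=-4(k+1)\phi^2$, while the right side equals $(\xi k)\phi^2+(k+1)\nabla_\xi\phi^2=(\xi k)\phi^2$. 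Since $\mathrm{rank}(\phi)=2$, the tensor $\phi^2$ is nowhere zero, so this yields $\xi k=-4(k+1)$.

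For the second fact I would invoke that, by Proposition 3.1 of \cite{13}, $M$ is $\eta$-Einstein, $\mathrm{Ric}=\alpha g+\beta\,\eta\otimes\eta$ with $\alpha+\beta=2k$, so that its scalar curvature is $R=\mathrm{tr}\,\mathrm{Ric}=3\alpha+\beta=2\alpha+2k$, and feed this into the contracted second Bianchi identity $\mathrm{div}\,\mathrm{Ric}=\frac{1}{2}\,dR$. Expanding $\nabla_{e_i}\mathrm{Ric}=(e_i\alpha)g+(e_i\beta)\,\eta\otimes\eta+\beta\big((\nabla_{e_i}\eta)\otimes\eta+\eta\otimes(\nabla_{e_i}\eta)\big)$ in a local orthonormal frame $\{e_i\}$, using $(\nabla_X\eta)(Y)=g(\nabla_X\xi,Y)$, $\nabla_\xi\xi=0$ and the value $\mathrm{div}\,\xi=\sum_i g(\nabla_{e_i}\xi,e_i)=\sum_i\big(1-\eta(e_i)^2+g(h'e_i,e_i)\big)=2$ (here $\mathrm{tr}\,h'=\mathrm{tr}(h\phi)=0$), one finds $(\mathrm{div}\,\mathrm{Ric})(X)=X\alpha+(\xi\beta+2\beta)\eta(X)$, whereas $\frac{1}{2}\,dR(X)=X\alpha+Xk$. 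Comparing, the $\alpha$-terms cancel and $Xk=(\xi\beta+2\beta)\eta(X)$ for every $X$; in particular $Xk=0$ whenever $\eta(X)=0$, so $\vec{\nabla}k=(\xi k)\,\xi$.

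Combining the two facts gives $\vec{\nabla}k=(\xi k)\,\xi=-4(k+1)\,\xi$. The only delicate point I expect is the bookkeeping in the second step: one must track carefully the covariant derivatives of $\eta\otimes\eta$ and the computation $\mathrm{div}\,\xi=2$, the crucial observation being that the $\alpha$-contributions to $\mathrm{div}\,\mathrm{Ric}$ and to $\frac{1}{2}\,dR$ are identical. (If one wishes to avoid the $\eta$-Einstein hypothesis in that step, the same conclusion follows by differentiating the nullity identity $R(X,Y)\xi=k[\eta(Y)X-\eta(X)Y]$, which after cancellation of the $\nabla\eta$-terms gives $(\nabla_U R)(X,\xi)\xi=(Uk)(X-\eta(X)\xi)$, and inserting this into the second Bianchi identity; the $\eta$-Einstein route is simply shorter.) Finally, every identity used above is tensorial and remains valid at points where $k=-1$ (where $h'=0$), so that case requires no separate argument.
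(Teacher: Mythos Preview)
The paper does not actually prove this lemma: it is stated with a citation to \cite{15} and used as a black box. So there is no in-paper argument to compare against. Your proof, however, is correct and self-contained from the ingredients already recorded in Section~3. The first step --- differentiating $h'^2=(k+1)\phi^2$ along $\xi$ using $\nabla_\xi h'=-2h'$ and $\nabla_\xi\phi^2=0$ --- cleanly yields $\xi k=-4(k+1)$. The second step, reading off $Xk=(\xi\beta+2\beta)\eta(X)$ from the contracted Bianchi identity applied to $\mathrm{Ric}=\alpha g+\beta\,\eta\otimes\eta$ (with $R=2\alpha+2k$ and $\mathrm{div}\,\xi=2$), is exactly the kind of argument used in \cite{15}; the cancellation of the $X\alpha$ terms you flag is indeed the point. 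Your remark that the tensorial identities remain valid when $k=-1$ (where $h'=0$) is also correct, so no separate case is needed.
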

\begin{lemma}\cite{15}
Let $(M,g)$ be a three-dimensional almost Kenmotsu manifold such that the characteristic vector field belongs to the generalized $k$-nullity distribution, then either $k=-1$ identically or $k<-1$ everywhere on $M$.
\end{lemma}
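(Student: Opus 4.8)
The plan is to show that the set $B=\{p\in M:\ k(p)=-1\}$ is either empty or all of $M$, which is exactly the claimed dichotomy once we recall from \eqref{eq} and the line following it that $k\le -1$ everywhere (so that outside $B$ one automatically has $k<-1$). Since $k$ is continuous, $B$ is closed and its complement $A=\{p\in M:\ k(p)<-1\}$ is open; as $M$ is connected, it is enough to prove that $B$ is \emph{open} as well, for then $M=A\sqcup B$ with both pieces open forces one of them to be empty.

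To prove $B$ open, I would fix $p\in B$ and pass to a small connected neighbourhood $W$ of $p$. Because $d\eta=0$, after shrinking $W$ we may write $\eta=dt$ for a smooth function $t$; compatibility of $g$ together with $\phi\xi=0$ gives $g(\xi,\cdot)=\eta(\cdot)$, hence $\xi=\vec{\nabla}t$, $\|\xi\|^{2}=1$ and $\xi(t)=\eta(\xi)=1$. Since $\eta$ is nowhere zero we have $dt\neq0$ on $W$, so $t$ is part of a coordinate system $(t,x^{1},x^{2})$ on $W$. By Lemma~\ref{lem}, $\vec{\nabla}k=-4(k+1)\xi=-4(k+1)\vec{\nabla}t$; contracting with $\partial_{x^{i}}$ yields $\partial k/\partial x^{i}=-4(k+1)\,dt(\partial_{x^{i}})=0$ for $i=1,2$, so on $W$ one has $k=K(t)$ for a one-variable function $K$. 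Contracting $\vec{\nabla}k=-4(k+1)\xi$ with $\xi$ instead gives $\xi(k)=-4(k+1)$, i.e. the ODE $K'(t)=-4\bigl(K(t)+1\bigr)$, whose unique solution with $K\bigl(t(p)\bigr)=-1$ is $K\equiv-1$. Hence $k\equiv-1$ on $W$, proving $B$ open.

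The heavy lifting is already done for us: the identity $\vec{\nabla}k=-4(k+1)\xi$ comes from Lemma~\ref{lem}, and the rest is a connectedness argument plus a scalar ODE. The one genuine idea is to read that gradient identity against the local primitive $\eta=dt$ and observe that it pins $k$ down as a function of $t$ alone; the points needing care are the verification $\xi=\vec{\nabla}t$, the fact that $t$ is a bona fide coordinate (which uses $\eta(\xi)=1$), and taking $W$ connected so that the locally constant function $k+1$ really is constant on $W$. I do not anticipate a serious obstacle beyond organising these pieces correctly.
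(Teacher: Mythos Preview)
The paper does not prove this lemma; it is quoted from \cite{15} without argument, so there is no in-text proof to compare against. Your proposal supplies a correct self-contained proof.

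The structure is sound: from $h'^{2}=(k+1)\phi^{2}$ one has $k\le-1$ everywhere, so the only issue is that $B=\{k=-1\}$ is open as well as closed. Writing $\eta=dt$ locally and using $g(\xi,\cdot)=\eta$ to identify $\xi=\vec{\nabla}t$, Lemma~\ref{lem} becomes $dk=-4(k+1)\,dt$ as $1$-forms; in coordinates $(t,x^{1},x^{2})$ this forces $\partial k/\partial x^{i}=0$, so $k=K(t)$ with $K'=-4(K+1)$. ODE uniqueness with $K(t(p))=-1$ gives $K\equiv-1$ on the interval $t(W)$, hence $k\equiv-1$ on $W$. That is exactly the argument one would expect and it goes through without obstruction. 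Do state explicitly that $M$ is assumed connected, since the dichotomy fails otherwise.

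One wording slip: in your closing paragraph you refer to ``the locally constant function $k+1$''. It is not locally constant; it obeys a first-order ODE along $t$. Connectedness of $W$ is needed so that $t(W)$ is an interval on which the uniqueness theorem applies, after which $k=K\circ t$ transports $K\equiv-1$ back to $W$. Adjust that phrase and the write-up is clean.
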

Now, we are ready to present our main theorem.
\begin{theorem}
Let the metric $g$ of a three-dimensional $\eta$-Einstein almost Kenmotsu
manifold $(M,g)$ be a Ricci $\rho$-soliton, then $M$ is a Kenmotsu manifold of constant sectional curvature $-1$ and the soliton is expanding with $\lambda=2$.
\end{theorem}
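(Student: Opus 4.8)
The plan is to differentiate~\eqref{soliton} and contract it against the curvature of $M$, extracting enough pointwise identities to force first $k\equiv-1$ and then $\beta\equiv0$. To begin, I would use the $\eta$-Einstein form~\eqref{einstein} together with $\alpha+\beta=2k$ and $\mathrm{Rc}(\xi)=2k\xi$ to record the scalar curvature $R=3\alpha+\beta=2(\alpha+k)$, so that~\eqref{soliton} becomes
\[
\tfrac12\mathcal{L}_V g=-(\alpha+\lambda+\rho R)\,g-\beta\,\eta\otimes\eta ,
\]
i.e. $\nabla_X V=-\mathrm{Rc}(X)-(\lambda+\rho R)X+\Omega X$, where $\Omega$ is the skew part of $\nabla V$. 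Since $R$ is a priori non-constant, the next step is to pin down its gradient: combining Lemma~\ref{lem}, the relation $\alpha+\beta=2k$, the structure identities~\eqref{eq}, $\nabla_\xi h'=-2h'$ and the contracted second Bianchi identity applied to~\eqref{einstein}, one checks that $\vec{\nabla}k$, $\vec{\nabla}\alpha$, $\vec{\nabla}\beta$ and $\vec{\nabla}R$ are all controlled (with $\vec{\nabla}k=-4(k+1)\xi$ and $\xi(R)$ a definite function of $\alpha,k$). This is what legitimizes differentiating the displayed equation.

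Next I would compute the curvature operator $R(X,Y)V$ in two ways: from $\nabla_X V=-\mathrm{Rc}(X)-(\lambda+\rho R)X+\Omega X$ via the Ricci identity, and from the intrinsic curvature of $M$, which in dimension three and under the generalized $k$-nullity hypothesis is completely determined by $g,\eta,h$ and the functions $\alpha,\beta$. Equating the two expressions and then evaluating the resulting identity and its traces on the $\phi$-adapted orthonormal frame $\{\xi,e_1,e_2=\phi e_1\}$ — with $e_1$ an eigenvector of $h$ in the case $k<-1$ — and using the connection and structure formulas of Section~2 (the expression for $\nabla_X\xi$, $he_1=\nu e_1$ with $\nu=\sqrt{-1-k}$, and $\nabla_\xi h'=-2h'$), should yield a finite system of scalar equations in the unknowns $\eta(V)$, $k$, $\alpha$, $\beta$ and the constant $\lambda$.

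Now I would invoke the dichotomy lemma. If $k<-1$ everywhere, then $h\neq0$ and $\vec{\nabla}k\neq0$; feeding $\nu>0$ and $\xi(k)=-4(k+1)\neq0$ into the system should yield a contradiction. I expect this to be the main obstacle: the extra $\rho R$-terms must be carried through accurately and shown to be incompatible with the $h$-eigenvalue data, and this is exactly where the rigidity claimed by the theorem is enforced. Hence $k\equiv-1$, so $h=h'=0$, $M$ is Kenmotsu, $\alpha+\beta=-2$ and $R(X,Y)\xi=\eta(X)Y-\eta(Y)X$. Re-running the integrability computation in this Kenmotsu case — where the curvature tensor is pinned down by $R$ alone — should force $\beta\equiv0$, hence $\alpha=-2$ and $\mathrm{Ric}=-2g$; in dimension three an Einstein metric has constant sectional curvature, so $M$ has constant curvature $-1$ and $R=-6$.

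Finally, with $\mathrm{Ric}=-2g$ and $R=-6$, equation~\eqref{soliton} reduces to $\tfrac12\mathcal{L}_V g=c\,g$ for a constant $c$ depending on $\lambda$ and $\rho$, so the local flow $\psi_t$ of $V$ acts by $\psi_t^*g=e^{2ct}g$. Since $M$ has constant, nonzero scalar curvature while scalar curvature scales nontrivially under constant conformal changes, this forces $c=0$; thus $V$ is Killing, and substituting back into~\eqref{soliton} yields $\lambda=2$. In particular $\lambda>0$, so the soliton is expanding, completing the proof.
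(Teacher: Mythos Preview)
Your broad outline --- split into $k<-1$ versus $k=-1$ via the dichotomy lemma, differentiate the soliton equation, and compare against the intrinsic curvature --- parallels the paper's, but the two decisive steps are only sketched and the last one contains a concrete error. In Case~1 the paper does not merely ``feed $\nu>0$ into a system'': it works with $(\mathcal{L}_V\nabla)$ and $(\mathcal{L}_V R)$ through Yano's formulas~\cite{16}, which depend only on $\mathcal{L}_V g$ and hence never see the unknown skew part $\Omega$ of $\nabla V$; it then computes $(\mathcal{L}_V\mathrm{Ric})$ in two ways and antisymmetrizes the difference on $\phi Y,\phi Z$ to extract the single scalar relation $\xi(\beta)=0$. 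Combined with the traced identity~\eqref{113} and Lemma~\ref{lem} this forces $\alpha+2\beta+2=0$, hence $\beta=0$, $\alpha=-2$, $k=-1$, the desired contradiction. Your route through $R(X,Y)V$ via the Ricci identity would drag $\nabla\Omega$ into the computation, and you neither say how to eliminate it nor exhibit any actual contradiction.

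In Case~2 your order of deductions is inverted relative to the paper and does not close. The paper first obtains $\xi(R)+2(6+R)=0$ from~\eqref{113}, then computes $(\mathcal{L}_V R)(X,Y)\xi$ both from~\eqref{133} and by Lie-differentiating the Kenmotsu identity $R(X,Y)\xi=\eta(X)Y-\eta(Y)X$, contracts over $X$, sets $Y=\xi$, and reads off $\lambda=2$ directly; constant sectional curvature $-1$ is only afterwards concluded by invoking Theorem~1 of~\cite{9}. You instead assert (without argument) that re-running the integrability computation forces $\beta\equiv0$, deduce constant curvature $-1$, and then use a scaling argument to make $V$ Killing. But substituting $\mathcal{L}_V g=0$, $\mathrm{Ric}=-2g$, $R=-6$ into~\eqref{soliton} gives $(\lambda-6\rho-2)g=0$, i.e.\ $\lambda=2+6\rho$, not $\lambda=2$; so your own final step does not produce the stated value of $\lambda$.
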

\begin{proof}
According to previous lemma, we prove the theorem in two cases where $k=-1$ identically and $k<-1$ everywhere on $M$.\\
{\bf Case 1:} Suppose that we have $k<-1$ everywhere on $M$ which is equivalent to $h\neq 0$. Putting relation (\ref{einstein}) into (\ref{soliton}) we obtain
\begin{equation}\label{43}
\mathcal{L}_V g =  - 2(\alpha  + \rho R + \lambda )g - 2\beta \eta  \otimes \eta.
\end{equation}
Taking the covariant differentiation from both sides of the above formula along an arbitrary vector field $X$ we obtain the following equality for any vector fields $Y$ and $Z$ on $M$.

\begin{align}\label{35}
  (\nabla _X \mathcal{L}_V g)(Y,Z) =&  - 2(X(\alpha ) + \rho X(R))g(Y,Z) - 2X(\beta )\eta (Y)\eta (Z) \nonumber \\
   &-2\beta g(X + h'X,Y)\eta (Z) - 2\beta g(X + h'X,Z)\eta (Y) \\
   &+ 2\beta \eta (X)\eta (Y)\eta (Z). \nonumber
\end{align}
But we know the following formula from Yano \cite{16},
\[
(\mathcal{L}_V \nabla _X g - \nabla _X \mathcal{L}_V g - \nabla _{[V,X]} g)(Y,Z) =  - g((\mathcal{L}_V \nabla )(X,Y),Z) - g((\mathcal{L}_V \nabla )(X,Z),Y).
\]
Since $\nabla$ is the Levi-Civita connection of $M$ we have $\nabla g=0$ and then the above formula becomes
\[
(\nabla _X \mathcal{L}_V g)(Y,Z) = g((\mathcal{L}_V \nabla )(X,Y),Z) + g((\mathcal{L}_V \nabla )(X,Z),Y).
\]
One can easily check that the operator $(\mathcal{L}_V \nabla )$ is a symmetric tensor field of type $(1,2)$ i.e., $(\mathcal{L}_V \nabla )(X,Y)=(\mathcal{L}_V \nabla )(Y ,X)$. In fact, this symmetry is a consequence of Jacobi identity in the Lie algebra of smooth real function on $M$. Hence, a simple combinatorial argument shows that
\begin{equation}\label{73}
g((\mathcal{L}_V \nabla )(X,Y),Z) = \frac{1}
{2}(\nabla _X \mathcal{L}_V g)(Y,Z) + \frac{1}
{2}(\nabla _Y \mathcal{L}_V g)(Z,X) - \frac{1}
{2}(\nabla _Z \mathcal{L}_V g)(X,Y).\,\,
\end{equation}
Using (\ref{73}) and (\ref{35}) the following formula is obtained,
\begin{align}
  (\mathcal{L}_V \nabla )(X,Y) =&  - (X(\alpha ) + \rho X(R))Y - (Y(\alpha ) + \rho Y(R))X + g(X,Y)\vec{\nabla} \alpha \label{83}\\
    &+ \rho g(X,Y)\vec{\nabla} R + \eta (X)\eta (Y)\vec{\nabla} \beta  - [X(\beta )\eta (Y) + 2\beta g(X + h'X,Y) \nonumber\\
  &- 2\beta \eta (X)\eta (Y) + Y(\beta )\eta (Y)]\xi .  \nonumber 
\end{align}
Considering an orthonormal local frame $\{e_i\}_{i=1}^3$ on $M$ and replacing  $X$ and $Y$ by $e_i$ and summing over $i=1,2,3$, we have
\begin{equation}\label{93}
\sum\limits_{i = 1}^3 {(\mathcal{L}_V \nabla )(e_i ,e_i )}  = \vec{\nabla} \alpha  + \vec{\nabla} \beta  + \rho \vec{\nabla} R - 2[\xi (\beta ) + 2\beta ]\xi .
\end{equation}
On the other hand, taking the covariant differentiation of the Ricci soliton equation (\ref{soliton}) along an arbitrary vector field $X$ we obtain $\nabla_X \mathcal{L}_V g=-2\rho X(R)g-2\nabla_X \mathrm{Ric}$, putting this relation into (\ref{73}) we obtain
\[
\begin{gathered}
  g((\mathcal{L}_V \nabla )(X,Y),Z) = (\nabla _Z \mathrm{Ric})(X,Y) - (\nabla _X \mathrm{Ric})(Y,Z) - (\nabla _Y \mathrm{Ric})(X,Z)\,\,\,\,\,\,\,\,\,\,\,\,\,\,\,\,\,\,\,\,\,\,\,\,\,\,\,\,\,\,\, \hfill \\
  \,\,\,\,\,\,\,\,\,\,\,\,\,\,\,\,\,\,\,\,\,\,\,\,\,\,\,\,\,\,\,\,\,\,\,\,\,\,\,\,\,\,\,\,\,\,\quad\quad + \rho Z(R)g(X,Y) - \rho X(R)g(Y,Z) - \rho Y(R)g(X,Z). \hfill \\
\end{gathered}
\]
Replacing $X=Y=e_i$ in the above formula and summing over $i=1,2,3$, we obtain $\sum\limits_{i=1}^3 (\mathcal{L}_V \nabla)(e_i,e_i)=\rho \vec{\nabla}R$, and this relation with (\ref{93}) gives us the following equation
\begin{equation}\label{113}
\vec{\nabla} \alpha  + \vec{\nabla} \beta  - 2[\xi (\beta ) + 2\beta ]\xi  = 0.
\end{equation}
Using the relation (\ref{83}) and taking the covariant differentiation of $(\mathcal{L}_V \nabla)(Y,Z)$ along an arbitrary vector field $X$, we may obtain
\begin{align}\label{132}
  &(\nabla _X \mathcal{L}_V \nabla )(Y,Z) \\
  =& - g(Y,\nabla _X \vec{\nabla} \alpha )Z - g(Z,\nabla _X \vec{\nabla} \alpha )Y - \rho g(Y,\nabla _X \vec{\nabla} R)Z \nonumber\\
  &- \rho g(Z,\nabla _X \vec{\nabla} R)Y + \eta (Y)\eta (Z)\nabla _X \vec{\nabla} \beta  + g(Y,Z)\nabla _X \vec{\nabla} \alpha  + \rho g(Y,Z)\nabla _X \vec{\nabla} R \nonumber \\
&+ [g(X + h'X,Z)\eta (Y) + g(X + h'X,Y)\eta (Z) - 2\eta (X)\eta (Y)\eta (Z)]\vec{\nabla} \beta \nonumber\\
 &- [Y(\beta )\eta (Z) + 2\beta g(Y + h'Y,Z) - 2\beta \eta (Y)\eta (Z) + \eta (Y)Z(\beta )](X + h'X) \nonumber\\
 &- g(Y,\nabla _X \vec{\nabla} \beta )\eta (Z)\xi  - g(Z,\nabla _X \vec{\nabla} \beta )\eta (Y)\xi  - 2\beta g((\nabla _X h')Y,Z)\xi \nonumber \\
&+ 2\beta [g(X + h'X,Y)\eta (Z) + g(X + h'X,Z)\eta (Y) + g(Y + h'Y,Z)\eta (X)]\xi \nonumber \\
&- Y(\beta )[g(X + h'X,Z) - 2\eta (X)\eta (Z)]\xi  - Z(\beta )[g(X + h'X,Y) - 2\eta (X)\eta (Y)]\xi \nonumber \\
&- X(\beta )[g(Y + h'Y,Z) - \eta (Y)\eta (Z)]\xi  - 6\eta (X)\eta (Y)\eta (Z)\xi .  \nonumber 
\end{align}
The following tonsorial identity is well known (see \cite{16}),
\begin{equation}\label{133}
(\mathcal{L}_V R)(X,Y)Z = (\nabla _X \mathcal{L}_V \nabla )(Y,Z) - (\nabla _Y \mathcal{L}_V \nabla )(X,Z),
\end{equation}
for any vector fields $X$, $Y$, and $Z$.\\
Also, note that for any smooth function $f$ on a Riemannian manifold $(M,g)$ we have $g(\nabla_X \vec{\nabla} f ,Y)=g(\nabla_Y \vec{\nabla} f ,X)$. Applying this fact and using the relations (\ref{133}) and (\ref{132}), by a straightforward computation we obtain
\begin{equation}\label{143}
\begin{gathered}
  (\mathcal{L}_V R)(X,Y)Z = g(Z,\nabla _Y \vec{\nabla} \alpha )X - g(Z,\nabla _X \vec{\nabla} \alpha )Y + \rho g(Z,\nabla _Y \vec{\nabla} R)X - \rho g(Z,\nabla _X \vec{\nabla} R)Y \hfill \\
\,\,\,\,\,\,\,\,\,\,\,\,\, + [g(X + h'X,Z)\eta (Y) - g(Y + h'Y,Z)\eta (X)]\vec{\nabla} \beta  + \eta (Z)[\eta (Y)\nabla _X \vec{\nabla} \beta  - \eta (X)\nabla _Y \vec{\nabla} \beta ] \hfill \\
\,\,\,\,\,\,\,\,\,\,\,\,\, + g(Y,Z)\nabla _X \vec{\nabla} \alpha  - g(X,Z)\nabla _Y \vec{\nabla} \alpha  + \rho g(Y,Z)\nabla _X \vec{\nabla} R - \rho g(X,Z)\nabla _Y \vec{\nabla} R\,\,\,\,\,\,\,\,\,\,\,\,\,\,\,\,\,\,\,\,\,\,\,\,\,\,\,\,\,\,\,\,\,\, \hfill \\
\,\,\,\,\,\,\,\,\,\,\,\,\,+ [X(\beta )\eta (Z) + 2\beta g(X + h'X,Z) - 2\beta \eta (X)\eta (Z) + \eta (X)Z(\beta )](Y + h'Y) \hfill \\
\,\,\,\,\,\,\,\,\,\,\,\,\,- [Y(\beta )\eta (Z) + 2\beta g(Y + h'Y,Z) - 2\beta \eta (Y)\eta (Z) + \eta (Y)Z(\beta )](X + h'X) \hfill \\
\,\,\,\,\,\,\,\,\,\,\,\,\, - X(\beta )g(Y + h'Y,Z)\xi  + Y(\beta )g(X + h'X,Z)\xi  - 2\beta g((\nabla _X h')Y,Z)\xi  \hfill \\
\,\,\,\,\,\,\,\,\,\,\,\,\,- [g(Z,\nabla _X \vec{\nabla} \beta )\eta (Y) - g(Z,\nabla _Y \vec{\nabla} \beta )\eta (X)]\xi  + 2\beta g((\nabla _Y h')X,Z)\xi,  \hfill \\
\end{gathered}
\end{equation}
for any vector fields $X$, $Y$, and $Z$.\\
Consider again the local orthonormal frame $\{e_i\}_{i=1}^3$, remind that for any smooth function $f$ on the Riemannian manifold $(M,g)$, the Laplace operator $\triangle$ acts on $f$ by 
\[
\triangle (f)=-\sum \limits_{i=1}^3 g(\nabla_{e_i}\vec{\nabla} f ,e_i). \]
Contracting the tonsorial relation (\ref{143}) over $X$, then a straightforward computation shows
\begin{align}\label{153}
  (\mathcal{L}_V \mathrm{Ric})(Y,Z) =&  - g(Y,Z)\triangle \alpha  - \rho g(Y,Z)\triangle R - \eta (Y)\eta (Z)\triangle \beta  + 2g(Z,\nabla _Y \vec{\nabla} \alpha )\\
   &+ 2\rho g(Z,\nabla _Y \vec{\nabla} R)- 2\xi (\beta )g(Y + h'Y,Z) + \eta (Y)g(Z + h'Z,\vec{\nabla} \beta )\nonumber \\
    &+ \eta (Z)g(Y + h'Y,\vec{\nabla} \beta )
  - \eta (Y)\eta (\nabla _Z \vec{\nabla} \beta ) - \eta (Z)\eta (\nabla _Y \vec{\nabla} \beta ) - 4\beta g(Y,Z)\nonumber \\
   &- 2\beta g(h'Y,Z)+ 4\beta \eta (Y)\eta (Z)- 2Z(\beta )\eta (Y) - 2Y(\beta )\eta (Z). \nonumber
\end{align}
Moreover, keeping in mind that $M$ is an $\eta$-Einstein manifold, by (\ref{einstein}) and a straightforward calculation we obtain that
\begin{equation}\label{163}
\begin{gathered}
  (\mathcal{L}_V \mathrm{Ric})(Y,Z) = [V(\alpha ) - 2\alpha (\lambda  + \alpha  + \rho R)]g(Y,Z) + [V(\beta ) - 2\alpha \beta  - 2\beta \eta (V)]\eta (Y)\eta (Z) \hfill \\
  \,\,\,\,\,\,\,\,\,\,\,\,\,\,\,\,\,\,\,\,\,\,\,\,\,\,\,\,\,\,\,\,\, + \beta [g(Z + h'Z,V) + \eta (\nabla _Z Y)]\eta (Y) + \beta [g(Y + h'Y,V) + \eta (\nabla _Y Z)]\eta (Z),\,\,\,\, \hfill \\
\end{gathered}
\end{equation}
for any vector fields $Y,Z\in \mathcal{X}(M)$.\\
Subtracting (\ref{153}) from (\ref{163}) gives an equation, substituting $Y$ and $Z$ with $\phi Y$
and $\phi Z$ respectively in the resulting equation, we may obtain
\begin{align}\label{173}
 &g(\phi Y,\phi Z)\triangle \alpha  + \rho g(\phi Y,\phi Z)\triangle R - 2g(\phi Z,\nabla _{\phi Y} \vec{\nabla} \alpha ) - 2\rho g(Z,\nabla _{\phi Y} \vec{\nabla} R) + 2\xi (\beta )g(\phi Y,Z) \\
&+ [V(\alpha ) - 2\alpha (\lambda  + \alpha  + \rho R) + 4\beta ]g(\phi Y,\phi Z) - 2\beta g(h'Y,Z) + 2\xi (\beta )g(hY,Z) = 0.\nonumber
\end{align}
The above formula holds for any $Y,Z \in \mathcal{X}(M)$, so interchanging $Y$ and $Z$ of relation (\ref{173}) yields a new equation, subtracting the resulting equation from (\ref{173}) and applying the relation $g(\nabla_X \vec{\nabla} f ,Y)=g(\nabla_Y \vec{\nabla} f ,X)$ again we may obtain $\xi (\beta )g(\phi Y,Z) = 0$ for any vector fields $Y$ and $Z$ on $M$, then it follows that
\begin{equation}\label{183}
\xi (\beta)=0.
\end{equation}
Using above equality in relation \ref{113} we get $\vec{\nabla} \alpha  + \vec{\nabla} \beta  - 4\beta \xi  = 0$, taking the inner product of this relation with $\xi$ we obtain $\xi (\beta)=4\beta $. Recall that $M$ is an $\eta$-Einstein almost
Kenmotsu manifold of dimension 3 if and only if $\xi$ belongs to the generalized $k$-nullity
distribution with $k=(\alpha +\beta)/2$, then by applying Lemma \ref{lem}, we obtain $\xi (k) = -4(k +1)$,
making use of $k=(\alpha +\beta)/2$ , $\xi (\beta ) = 0$ and $\xi (\beta)=4\beta $ in this relation we obtain
\begin{equation}\label{193}
\alpha +2\beta +2=0.
\end{equation}
It is easy to obtain from (\ref{183}) and (\ref{193}) that $\beta=0$ and $\alpha =-2$. However, in fact, in this context we have $k=(\alpha +\beta)/2 =-1$, which contradicts the assumption $k<-1$ everywhere on $M$. Thus the Case 1 never happens.\\
{\bf Case 2:} In the case where $k=-1$, from \ref{eq} we get that $h=0$ and hence $M$ is a Kenmotsu manifold (see Proposition 3 of \cite{9}). Also, according to Lemma 1 of (\cite{10}) the Ricci curvature tensor of $(M,g)$ can be written as follows.
\begin{equation}\label{203}
\mathrm{Ric}(X,Y) = (1 + \frac{R}
{2})g(X,Y) - (3 + \frac{R}
{2})\eta (X)\eta (Y)\,
\end{equation}
which means $\alpha =1+R/2$ and $\beta=-(3+R/2)$. By replacing these equalities in \ref{113}, we may obtain
\begin{equation}\label{213}
\xi (R) + 2(6 + R) = 0.
\end{equation}
Also, relation (\ref{83}) can be rewritten as follows
\begin{align}
 2(\mathcal{L}_V \nabla )(X,Y) &= (1 + 2\rho )[g(X,Y)\vec{\nabla} R - X(R)Y - Y(R)X] + X(R)\eta (Y)\xi \\
 &+ Y(R)\eta (X)\xi - \eta (X)\eta (Y)\vec{\nabla} R + 2(6 + R)[g(X,Y)\xi  - \eta (X)\eta (Y)\xi ]. \nonumber 
\end{align}

Hence, we can write
\begin{equation}\label{223}
2(\mathcal{L}_V \nabla )(Y,\xi ) = \xi (R)[\phi ^2  - 2\rho Y] - 2\rho Y(R)\xi .
\end{equation}
By differentiation of (\ref{223}) along an arbitrary vector field $X$, we get
\begin{align}
  &2(\nabla _X \mathcal{L}_V \nabla )(Y,\xi ) + 2(\mathcal{L}_V \nabla )(Y,X)  \\
  =&X(\xi (R))\phi ^2 (Y) + \xi (R)[g(X,Y)\xi  + \eta (Y)X - \eta (X)Y - \eta (X)\eta (Y)\xi ] \nonumber\\
 &+ 2\rho [g(X,Y)\vec{\nabla} R - X(R)Y - Y(R)X] - 2\rho [(\nabla _X dR)(\xi )Y + (\nabla _X dR)(Y)\xi ]. \nonumber  
\end{align}
With the help of the above formula and (\ref{133}) we can write
\begin{equation}\label{233}
2(\mathcal{L}_V \nabla )(X,Y)Z = X(\xi (R))\phi ^2 (Y) - Y(\xi (R))\phi ^2 (X) + 2\xi (R)[\eta (Y)X - \eta (X)Y].\,
\end{equation}
On the other hand, the equality $R(X,Y)\xi  = \eta (X)Y - \eta (Y)\xi$ holds in any Kenmotsu manifolds and by differentiation both sides of this equality along the vector field $V$ and making use of (\ref{43}) we obtain
\begin{equation}\label{243}
(\mathcal{L}_V R)(X,Y)\xi  + R(X,Y)\mathcal{L}_V \xi  = (4 - 2\lambda )[\eta (X)Y - \eta (Y)X]\,\, + g(X,\mathcal{L}_V \xi )Y - g(Y,\mathcal{L}_V \xi )X.
\end{equation}
Comparing (\ref{233}) and (\ref{243}) yields an equation and then contracting the result equation over $X$ and making use of (\ref{203}) again, we get
\begin{align}
&(6 + R)g(Y,\mathcal{L}_V \xi ) - (6 + R)\eta (Y)\eta (\mathcal{L}_V \xi )=\\
&- Y(\xi (R)) - (\xi (\xi (R)))\eta (Y) - 4(4 - 2\lambda  + \xi (R))\eta (Y).\nonumber
\end{align}
If we set $Y=\xi$ in the above formula then, by (\ref{213}) we get $\lambda =2$ which shows the soliton is expanding. Now, by Theorem 1 of \cite{9} we have completed the proof.
\end{proof}
\section{Example}
In what follows we consider $M=\mathbb{R}\times_\gamma N$, where, $N$ is a Riemannian surface with constant negative sectional curvature (a Kahler manifold), $\mathbb{R}$ is real line and $\gamma=\gamma (t)$ is warp function. In fact, we consider the following warped  metric on $M$
\[
g=\dfrac{h}{\gamma^2(t)}+dt^2,
\]
where, $h$ is a Riemannian metric with constant curvature. So, $M$ is a $\beta$-Kenmotsu manifold with $\beta={\gamma'(t)/ \gamma(t)}$ (see \cite{1}). Suppose that $R$ stands for scalar curvature of $M$ then, an argument analogous to that of example 2.10 in \cite{3} shows that $g$ is a $\rho$-soliton with vector field $V=-\mu+f\dfrac{\partial}{\partial t}$ if and only if 
\[(\ln \gamma)''-\gamma^2 K^N=0,\]
where, $K^N$ denotes the Gaussian curvature of $N$ and,
\[f=\dfrac{\gamma'' +(\lambda +\rho R)\gamma +K^N \gamma^3}{{\gamma '}^2}-\dfrac{3\gamma '}{\gamma}.\]
If we just restrict attention to the case in which $K^N=-1$, then this leads us to the following ordinary differential equation,
\[(\ln \gamma)''+\gamma^2 =0.\]
The curve $\gamma =\dfrac{1}{\cosh t}$ is a particular solution for the above equation and for which we have
\[g=(\cosh t)^2 h+dt^2.\]
Hence, $M$ is a $\beta$-Kenmotsu manifold with $\beta =\tanh t$ (see \cite{1}). By a $D$-homothetic transformation we derive a Kenmotsu metric on $M$. Let
\[g^*=\sigma g+(1-\sigma )\eta \otimes \eta,\]
where, $\sigma$ is a positive function which depends only on $\xi =\dfrac{\partial}{\partial t}$. Using Lemma 4.1 in the paper \cite{2}, first we derive a $\beta$-Kenmotsu manifold $(M^*,\phi,\xi ,\eta ,g^*)$ with $\beta^* =\beta+\dfrac{\xi (\sigma)}{2\sigma}$. Now, we wish to choose $\beta^*$ such that the smooth manifold $M^*$ be a Kenmotsu manifold. It is sufficient to set  $1=\beta+\dfrac{\xi (\sigma)}{2\sigma}$, which leads us to
\[\dfrac{\partial}{\partial t}(\ln \sigma)=2(1-\beta).\]
The curve $\sigma=\dfrac{e^{2t}}{(\cosh t)^2}$ satisfies the above equation and so, the metric $g^*=e^{2t}h +dt^2$ is the desired Kenmotsu metric.
\section{Conclusion}
In this paper, we showed that if the metric of a three dimensional almost Kenmotsu manifold, be a $\rho$-soliton then, the underlying manifold is a Kensmotsu manifold with constant sectional curvature $-1$ and the soliton is expanding. Of course, we have considered 3-dimensional manifolds and extending the results of this paper to higher dimensional spaces will be a good project.

\end{document}